\documentclass[10pt]{amsart}

\usepackage{amssymb,amsmath,amsthm,amsfonts}
\usepackage{pdfsync}

\newcommand{\comment}[1]{}

\newtheorem{propn}{Proposition}
\newtheorem{thm}{Theorem}
\newtheorem{cor}{Corollary}

\newtheorem*{thmA}{Theorem A}
\newtheorem*{thmB}{Theorem B}
\newtheorem*{thmC}{Theorem C}
\theoremstyle{remark}

\theoremstyle{definition}

\makeatletter
\def\imod#1{\allowbreak\mkern10mu({\operator@font mod}\,\,#1)}
\makeatother

\newcommand{\R}{\mathbb R}

\newcommand{\Z}{\mathbb Z}

\newcommand{\D}{\delta}

\newcommand{\VE}{\varepsilon}
\newcommand{\A}{\alpha}
\newcommand{\B}{\beta}

\newcommand{\be}{\begin{equation}}
\newcommand{\ee}{\end{equation}}
\newcommand{\bee}{\begin{equation*}}
\newcommand{\eee}{\end{equation*}}

%
\setlength{\topmargin}{0pt}
\setlength{\oddsidemargin}{.0in}
\setlength{\textwidth}{6.5truein}
\setlength{\textheight}{9.1truein}
\setlength{\evensidemargin}{.0in}
\begin{document}
\title[Simultaneous Polynomial Recurrence Modulo 1]{A Purely Combinatorial Approach to\\ Simultaneous Polynomial Recurrence Modulo 1}
\author{Ernie Croot\quad\quad\quad  Neil Lyall\quad\quad\quad Alex Rice}

\address{School of Mathematics, Georgia Institute of Technology, Atlanta, GA 30332, USA}
\email{ecroot@math.gatech.edu}

\address{Department of Mathematics, The University of Georgia, Athens, GA 30602, USA}
\email{lyall@math.uga.edu}

\address{Department of Mathematics, Bucknell University, Lewisburg, PA 17837, USA}
\email{alex.rice@bucknell.edu}

\subjclass[2010]{11J54,\,11J71}

\begin{abstract}
Using purely combinatorial means we obtain results on simultaneous Diophantine approximation modulo 1 for systems of polynomials with real coefficients and no constant term.
\end{abstract}
\maketitle

\setlength{\parskip}{5pt}


\section{Introduction}

We begin by recalling the well-known Kronecker approximation theorem:
\begin{thmA}[Kronecker Approximation Theorem]
Given any real numbers $\A_1,\dots,\A_d$, and an integer $N\geq1$, there exists an integer $1\leq n\leq N$ such that \[\|n\A_j\|\ll N^{-1/d} \text{ \ for \ }j=1,\dots,d.\]
\end{thmA}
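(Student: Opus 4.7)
The plan is to use the classical Dirichlet-style pigeonhole argument on the $d$-dimensional torus, which is entirely combinatorial and therefore fits the ethos of the paper. Set $M=\lfloor N^{1/d}\rfloor$, so that $M^d\leq N$ and $1/M\leq 2N^{-1/d}$ (say), and partition the torus $\T^d=[0,1)^d$ into $M^d$ axis-aligned boxes of side length $1/M$.

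Next I would consider the $N+1$ orbit points $P_n=(\{n\A_1\},\dots,\{n\A_d\})\in\T^d$ for $n=0,1,\dots,N$, where $\{x\}$ denotes the fractional part. Since $M^d\leq N<N+1$, the pigeonhole principle forces two distinct indices $0\leq m<n\leq N$ with $P_m$ and $P_n$ lying in a common box. Setting $k=n-m$, we then have $1\leq k\leq N$ and, componentwise,
\[
\|k\A_j\|=\|(n-m)\A_j\|\leq |\{n\A_j\}-\{m\A_j\}|\leq 1/M\ll N^{-1/d}
\]
for every $j=1,\dots,d$, which is exactly the conclusion of Theorem~A with $n$ replaced by $k$.

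There is no substantive obstacle; the only minor point of care is the counting of boxes versus points, which is why I choose $M=\lfloor N^{1/d}\rfloor$ (rather than $\lceil N^{1/d}\rceil$) to guarantee $M^d\leq N$ while still obtaining a bound of the correct order $N^{-1/d}$. Everything else is book-keeping: the fractional parts live in the torus, boxes of side $1/M$ have diameter $\sqrt{d}/M$ in the sup norm interpretation reduces to $1/M$ coordinatewise, and differences of fractional parts control $\|\cdot\|$ from above. This keeps the argument purely combinatorial, in line with the rest of the paper.
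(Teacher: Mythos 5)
Your argument is correct and is precisely the pigeonhole-on-the-torus argument the paper itself sketches immediately after stating Theorem A (the paper partitions the torus into $N$ boxes of diameter $O(N^{-1/d})$ and pigeonholes the orbit points $P_0,\dots,P_N$; your choice of $M^d$ boxes with $M=\lfloor N^{1/d}\rfloor$ is the same idea with slightly different bookkeeping). No gaps, and the side-length bound $1/M\le 2N^{-1/d}$ correctly yields the stated $\ll N^{-1/d}$.
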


\emph{Remark on Notation:} In Theorem A above, and in the rest of this paper, we use the standard notations $\|\A\|$ to denote, for a given $\A\in\R$, the distance from  $\A$ to the nearest integer and the Vinogradov symbol $\ll$ to denote ``less than an absolute constant times''.

Kronecker's theorem is of course an almost immediate consequence of the pigeonhole principle: one simply partitions the torus $(\R/\Z)^d$ into $N$ ``boxes" of diameter $O(N^{1/d})$ and   considers the orbit of $(n\A_1,\dots, n\A_d)$. 
In \cite{GT}, Green and Tao presented a proof of the following quadratic analogue of the above theorem, due to Schmidt \cite{Schmidt}.

\begin{thmB}[Simultaneous Quadratic Recurrence, Proposition A.2 in \cite{GT}]
Given any real numbers $\A_1,\dots,\A_d$, and an integer $N\geq1$, there exists an integer $1\leq n\leq N$ such that \[\|n^2\A_j\|\ll dN^{-c/d^2} \text{ \ for \ }j=1,\dots,d.\]
\end{thmB}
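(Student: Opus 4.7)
The goal is to extend Theorem~A from the linear to the quadratic setting, and the key elementary tool I would use is the identity $\|h^2\A_j\|\le h\|h\A_j\|$, which follows from writing $h\A_j=m_j+\theta_j$ with $m_j\in\Z$ and $|\theta_j|=\|h\A_j\|$. This converts any linear Diophantine bound into a quadratic one at the cost of a factor of $h$, which must be controlled by keeping the integer $h$ suitably small.

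As a first attempt I would apply pigeonhole to the orbit $\{(n\A_1,\dots,n\A_d):1\le n\le N\}\subset\T^d$: partitioning $\T^d$ into $K^d$ congruent boxes produces a set $S\subseteq[1,N]$ of size at least $N/K^d$ lying in a single box, so every pairwise difference $h=s-s'$ satisfies $\|h\A_j\|\ll 1/K$. A further pigeonhole within $[1,N]$ yields some nonzero $|h|\ll N/|S|\le K^d$, and combining with the identity above gives $\|h^2\A_j\|\ll K^{d-1}$. This bound is vacuous for $d\ge 2$ regardless of the choice of $K$, so a single application of this scheme does not suffice, and the plan must involve iteration or amplification.

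The set of admissible $h$ at a given scale is Bohr-like: it has substantial additive structure $B-B\subseteq B'$ (at the cost of slightly enlarging the radius), and by classical arguments contains long arithmetic progressions together with small nontrivial elements. I would therefore try iterating the pigeonhole step, combined with Bogolyubov-style extraction of additive structure inside the Bohr set, to find progressively smaller $h$ with correspondingly tighter linear bounds. Executing roughly $d$ refinements and balancing the shrinking set size against the tightening bound should ultimately produce an $h$ with $\|h^2\A_j\|\ll dN^{-c/d^2}$, the $d^2$ in the exponent reflecting a ``Kronecker exponent $1/d$ multiplied by a quadratic-to-linear reduction exponent $1/d$'' heuristic. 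The principal obstacle is that each naive refinement tightens $\|h\A_j\|$ by the same factor it enlarges the minimum available $h$, leaving the quadratic bound unchanged; breaking this symmetry---presumably by exploiting simultaneity across the $d$ coordinates or the specific arithmetic structure of Bohr sets---is the crux of the argument, and the place where purely combinatorial ingenuity (rather than Fourier or Weyl-differencing machinery) will be needed.
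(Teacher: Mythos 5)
You should first note that the paper does not prove Theorem~B at all: the bound $dN^{-c/d^2}$ is cited from \cite{GT} and \cite{Schmidt}, and the paper states explicitly that all known proofs of a bound of that strength use Fourier analysis (Weyl's inequality) plus an induction on $d$. The paper's own elementary contribution, Theorem~\ref{1}, achieves only $\exp\bigl(-c_1(d^{-10}\log N)^{1/4}\bigr)$, which is far weaker; so even a successful elementary argument should not be expected to land on the exponent in Theorem~B.

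As for your sketch, the gap is the one you yourself diagnose: the inequality $\|h^2\A_j\|\le h\|h\A_j\|$ together with the first pigeonhole gives only $\|h^2\A_j\|\ll K^{d}\cdot K^{-1}=K^{d-1}\ge 1$ for $d\ge 2$, and any Bohr-set refinement that tightens $\|h\A_j\|$ by a factor while enlarging the minimal nonzero $h$ by the reciprocal factor leaves the product unchanged. You gesture at ``breaking the symmetry'' via Bogolyubov-type structure but supply no mechanism, so as written this is not a proof, and no amount of iterating the same move will change the scale-invariant conclusion.

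The paper's Theorem~\ref{1} sidesteps this obstruction with a genuinely different idea. Instead of a single square $h^2$, it works with sums of squares $n_1^2+\cdots+n_s^2$ and the statement $P_d(s,M,\VE)$ that such a sum with $0\le n_i\le M$ (not all zero) can be made within $\VE$ of an integer against every $\A_j$. The algebraic identity
\[X_1^2+\cdots+X_n^2=(X_1+\cdots+X_n)^2-\sum_{\substack{i,i'\\i\ne i'}}X_iX_{i'}\]
lets one recombine a sum of squares into a single square plus cross terms, and those cross terms $x_ix_{i'}\A_j$ are made small in advance by an iterated application of Kronecker's theorem to the $d\cdot(m-1)$ linear forms $x_{i'}\A_j$ (not to the $\A_j$ themselves). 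A subset-sum pigeonhole on $x_1^2,\dots,x_m^2$ then supplies two disjoint sums of squares $Y,Z$ that are nearly congruent modulo~$1$ against every $\A_j$; applying $P_d(2s,M,\VE)$ to the common residues $\B_j$ and recombining via the identity yields $P_d(s,\cdot,2\VE)$. The base case $P_d(4,\VE^{-d/2},\VE)$ comes from Kronecker plus Lagrange's four-square theorem, and two applications of the halving step give $s=1$. The gain is realized by halving the number of squares $s$ at each stage, not by making $h$ simultaneously small and its linear part tiny, which is exactly why the scale-invariance you flagged never enters the argument.
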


The argument presented by Green and Tao in \cite{GT} was later extended (in a straightforward manner) by the second author and Magyar in \cite{LM3} to any system of polynomials without constant term.

\begin{thmC}[Simultaneous Polynomial Recurrence, consequence of Proposition B.2 in \cite{LM3}]
Given any system of polynomials $h_1,\dots,h_d$ of degree at most $k$ with real coefficients and no constant term and an integer $N\geq1$, there exists an integer $1\leq n\leq N$ such that \[\|h_j(n)\|\ll k^2d N^{-c_k/d^2} \text{ \ for \ }j=1,\dots,d.\]\end{thmC}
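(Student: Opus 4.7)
The plan is to proceed by induction on the degree $k$. The base case $k=1$ is immediate: for linear polynomials $h_j(x)=a_{j,1}x$, the conclusion asks for an $n\leq N$ with $\|na_{j,1}\|\ll N^{-c_1/d^2}$, which is delivered (with room to spare, for $c_1$ a sufficiently small absolute constant) by Theorem A's $\|na_{j,1}\|\ll N^{-1/d}$.

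For the inductive step, assume the statement holds for polynomials of degree at most $k-1$. Given $h_1,\dots,h_d$ of degree at most $k$ with no constant term, I would write $h_j(x)=a_{j,k}x^k+r_j(x)$ with $\deg r_j\leq k-1$ and $r_j(0)=0$. The strategy is a Kronecker-type reduction on the leading coefficients, followed by an application of the inductive hypothesis to the remainders on a shorter range.

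Concretely, fix a parameter $M$ to be optimized later. Theorem A applied to the $d$ real numbers $a_{1,k},\dots,a_{d,k}$ with parameter $M$ yields an integer $1\leq m\leq M$ with $\|ma_{j,k}\|\ll M^{-1/d}$ for every $j$. Writing $ma_{j,k}=p_j+\delta_j$ with $p_j\in\Z$ and $|\delta_j|\ll M^{-1/d}$, I restrict attention to $n$ of the form $n=ms$, $1\leq s\leq N/m$. A direct computation gives
\[h_j(ms)\equiv m^{k-1}\delta_j s^k+r_j(ms)\pmod 1,\]
so $\|h_j(ms)\|\leq m^{k-1}|\delta_j|s^k+\|r_j(ms)\|\ll m^{k-1}M^{-1/d}s^k+\|r_j(ms)\|$. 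The polynomials $r_j(ms)$, viewed in the variable $s$, have degree at most $k-1$ with no constant term, so invoking the inductive hypothesis on $s\in[1,N/m]$ should furnish an $s$ with $\|r_j(ms)\|\ll(k-1)^2d(N/m)^{-c_{k-1}/d^2}$. Balancing the two contributions by the choice of $M$ would then yield the claimed $\ll k^2dN^{-c_k/d^2}$.

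The main obstacle is precisely this balancing. A direct read-off of the estimates above loses a factor of $d$ in the exponent at each level of the recursion, producing a bound closer to $N^{-c/d^{k+1}}$ rather than the claimed $N^{-c_k/d^2}$. Obtaining the stronger $d^{-2}$ dependence (uniform in $k$) almost certainly requires a more entangled treatment of the coefficients across the different degrees --- for instance, applying Theorem A jointly to the full array $\{a_{j,i}:1\leq j\leq d,\,1\leq i\leq k\}$ at the outset, or iterating the reduction with carefully linked scales so that losses telescope in the exponent rather than multiply. Making these choices precise, tracking the resulting constants $c_k$, and verifying that the final bound takes the form $k^2dN^{-c_k/d^2}$ is, I expect, the technical heart of the argument.
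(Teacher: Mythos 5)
The first thing to say is that the paper does \emph{not} prove Theorem C; it cites it as a consequence of Proposition B.2 in \cite{LM3}, and explicitly notes that the only known proofs of this bound follow Schmidt's Fourier-analytic framework (Weyl's inequality, combined with an induction on $d$ rather than on $k$). Indeed, one of the paper's stated motivations is precisely that no elementary proof of a bound of the form $N^{-c_k/d^{C_k}}$ is known; the elementary results the authors actually prove (Theorems \ref{1}--\ref{3}) give the much weaker $\exp(-c_1 d^{-C}(\log N)^{c_2})$. So a fully correct elementary proof of Theorem C with the stated strength would be a new result, not a reconstruction of the paper's argument.

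Your proposed inductive step has a more serious quantitative problem than you acknowledge. After the Kronecker step you have $\|m a_{j,k}\|\ll M^{-1/d}$, and the error term you must control is $m^{k-1}|\delta_j|s^k$. Since Kronecker gives no lower bound on $m$, and $s$ ranges up to $N/m$, this error can be as large as $m^{k-1}M^{-1/d}(N/m)^k = N^k m^{-1}M^{-1/d}$, so in the worst case $m=1$ it is $\approx N^k M^{-1/d}$. To make this small you would need $M\gg N^{kd}$, vastly larger than the range $[1,N]$ in which you must find $n$. Shrinking the $s$-range to $[1,S]$ with $S$ small and $M=N/S$ does not rescue this: balancing $M^{k-1}M^{-1/d}S^k$ against $S^{-c_{k-1}/d^2}$ forces an inequality like $d(k-1+\theta)<1-\theta$, which is impossible for $k\geq 2$. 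The underlying issue is that applying Kronecker to the leading coefficient controls $\|m a_{j,k}\|$, but the quantity you actually need, $\|(ms)^k a_{j,k}\|$, inflates this by a factor $m^{k-1}s^k$ over which you have no handle. The paper's own reduction (in the proof of Corollary \ref{cor} and Section \ref{sim}) avoids this by treating the \emph{top}-degree term with the hard recurrence theorem (Theorem \ref{2}, making $\|n^k\alpha_j\|$ itself small to very high accuracy) and only then passing to the dilates $tn$ for $t$ in a short range, where the inflation $t^k$ is controlled; the lower-degree remainder is then handled inductively. Reversing this order, as you do, is exactly what breaks. And even with that fix one does not recover $N^{-c_k/d^2}$; that dependence genuinely requires the Weyl-inequality machinery that the paper deliberately sets aside.
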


To the best of our knowledge the only known proofs of Theorems B and C, with bounds of comparable strength to those stated above, all follow the general framework of Schmidt's original argument (although it should be noted that he in fact only worked with quadratic polynomials). This argument is Fourier analytic in nature and relies on estimates for exponential sums (Weyl's inequality) as well as an induction on $d$; we refer the reader to \cite{GT} for a more comprehensive sketch of the main features of Schmidt's argument. It would be of interest to find a new approach to this problem that gives bounds of the form $N^{-c_k/d^{C_k}}$ by more elementary means.

The purpose of this note is to present an elementary approach to simultaneous polynomial recurrence modulo 1, for systems of polynomials with real coefficients and no constant term, that gives effective quantitative bounds of the form 
\[\exp\bigl(-c_1 d^{-C}(\log N)^{c_2}\bigr)\]
for certain small constants $c_1,\,c_2>0$ and large constant $C>0$ that depend on $k$, but are independent of $d$.

The precise statements of the main results of this paper, namely Theorems \ref{1} and \ref{3}, can be found at the beginning of Sections \ref{Sec2} and \ref{Sec3} respectively. Theorem \ref{1} covers the special case of quadratic polynomials and its proof is presented in Section \ref{Sec2} in isolation from the general argument 
as it is in this case that our argument is at its most transparent. The proof of Theorem \ref{3}, which covers the general case for polynomials of arbitrary degree, is presented in Section \ref{Sec3} and proceeds via a somewhat intricate (double) inductive argument on the maximum degree of the polynomials.


\section{Simultaneous Quadratic Recurrence}\label{Sec2}
In this section we present an elementary approach to simultaneous quadratic recurrence modulo 1. 
Our main result is the following

\begin{thm}\label{1}
Given any real numbers $\A_1,\dots,\A_d$, and an integer $N\geq1$, there exists an integer $1\leq n\leq N$ and small absolute constant $c_1>0$ such that \[\|n^2\A_j\|\leq \exp\bigl(-c_1(d^{-10}\log N)^{1/4}\bigr) \text{ \ for \ }j=1,\dots,d.\]
\end{thm}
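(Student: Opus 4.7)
The plan is to prove Theorem \ref{1} via a purely combinatorial iterative argument based on pigeonhole, leveraging the factorization identity $n_2^2 - n_1^2 = (n_2 - n_1)(n_2 + n_1)$ to reduce the quadratic Bohr-set problem to a sequence of simpler linear ones.

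As a first move, I would partition $\T^d$ into roughly $\eta^{-d}$ cubes of side $\eta$ and apply pigeonhole to the orbit $\{(n^2\alpha_1,\dots,n^2\alpha_d):1\leq n\leq N\}$. Provided $N\gg\eta^{-d}$, this yields a large family (of size $\gtrsim N^2\eta^d$) of pairs $n_1<n_2$ with $\|(n_2^2-n_1^2)\alpha_j\|\leq\eta$ for all $j$. Setting $h=n_2-n_1$ and $s=n_1+n_2$, one obtains a dense collection of bilinear pairs $(h,s)$ with $\|hs\alpha_j\|\leq\eta$. A second pigeonhole in the $h$-variable isolates some $h_0$ with a dense companion set $\mathcal{S}_{h_0}\subset[1,2N]$, and a Dirichlet-type argument on consecutive gaps in $\mathcal{S}_{h_0}$ then produces a small $r$ with $\|rh_0\alpha_j\|\leq 2\eta$. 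Writing $L=rh_0$, this gives an effective simultaneous rational approximation $L\alpha_j=K_j+\tau_j$ with $|\tau_j|\leq 2\eta$.

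The crux is to recycle this approximation iteratively. For $n$ of the form $tL+\rho$, the quantity $n^2\alpha_j$ decomposes mod $1$ into (i) a residue $n^2K_j/L \bmod 1$, which is a quadratic Diophantine problem in the finite cyclic group $\Z/L\Z$, and (ii) a continuous error of size $\lesssim n^2|\tau_j|/L$. Iterating the same pigeonhole-plus-linearization scheme on the discrete component---each pass producing a new rational approximation at a finer scale, with effective modulus shrinking through the iteration---eventually yields an $n\leq N$ with $\|n^2\alpha_j\|\leq\exp\bigl(-c_1(d^{-10}\log N)^{1/4}\bigr)$. The fourth-root exponent on $\log N$ reflects an essentially four-fold iteration of pigeonhole, each round consuming a quarter of the available log-budget, while the $d^{-10}$ factor accumulates from the $d$-dependent losses in each Dirichlet/pigeonhole step.

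The main obstacle will be parameter management across the iteration: at each stage one must simultaneously keep (a) the pigeonholed set large enough to sustain a further pigeonhole, (b) the effective denominator $L_k$ small enough that a multiple $t_kL_k$ lies within the allowed range of $n$, and (c) the cumulative continuous error terms strictly smaller than the discrete main term at every level. Carefully balancing the geometric sequences $(\eta_k)$, $(L_k)$, $(t_k)$ to satisfy these competing demands is the technical heart of the proof and is exactly what produces the stated quantitative bound.
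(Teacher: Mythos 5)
Your proposal takes a genuinely different route from the paper, but it has a real gap at its center. The paper's mechanism is a Waring-style halving scheme on \emph{sums of squares}: one defines the statement $P_d(s,M,\VE)$ asserting that some nonzero sum $n_1^2+\cdots+n_s^2$ with $n_i\leq M$ hits all the $\A_j$ within $\VE$, and proves the key implication $P_d(2s,M,\VE)\Rightarrow P_d(s,N,2\VE)$ via Kronecker's theorem (to get $x_1,\dots,x_m$ with all pairwise products $\|x_ix_{i'}\A_j\|$ small), pigeonhole on the $2^m$ subset-sums of the $x_i^2$, and the identity $X_1^2+\cdots+X_n^2=(X_1+\cdots+X_n)^2-\sum_{i\ne i'}X_iX_{i'}$ to recombine everything into $s$ genuine squares. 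Lagrange's four-square theorem seeds the induction at $s=4$, and exactly two applications yield $s=1$; the two nested $\log^2$-type constraints are what produce the fourth-root exponent. Your factorization $n_2^2-n_1^2=(n_2-n_1)(n_2+n_1)$ is a bilinear linearization in the Weyl-differencing spirit, followed by a rational approximation $L\A_j\approx K_j$ and a proposed iteration on the discrete component --- a fundamentally different decomposition.

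The gap is in the iteration step, and it is not cosmetic. After you obtain $L\A_j=K_j+\tau_j$ with $|\tau_j|\leq 2\eta$, the residual ``discrete'' problem is to find $n$ making all $\|n^2K_j/L\|$ small while keeping the continuous error $n^2|\tau_j|/L$ under control. You assert that repeating ``the same pigeonhole-plus-linearization scheme'' on this residual produces ``a new rational approximation at a finer scale, with effective modulus shrinking through the iteration,'' but this is precisely the claim that needs a proof, and nothing in the sketch supplies one. The residual problem over $\Z/L\Z$ has the same quadratic shape as the original, with $d$ free targets $K_1/L,\dots,K_d/L$, and a fresh pigeonhole on it does not obviously reduce the modulus nor decouple the $d$ constraints: all it buys you is another layer of the same difficulty. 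Moreover, the three constraints you list under ``parameter management'' --- $L_k$ small enough to fit inside $[1,N]$, the pigeonholed set at each stage still large, and the accumulated continuous errors subordinate to the discrete main terms --- are exactly where the argument would succeed or fail, yet you explicitly defer them as ``the technical heart.'' In particular, your initial Dirichlet step gives $r\lesssim\eta^{-d}$ and hence $L=rh_0$ that can be as large as $N\eta^{-d}$, with no mechanism to keep $h_0$ small, so even the first modulus is not under control. Finally, your attribution of the exponent $1/4$ to ``a four-fold iteration of pigeonhole'' does not match how that exponent actually arises in the paper (two applications of the key proposition, each imposing a $\log^2$ blow-up from balancing a $2^m$ pigeonhole against a $X^{-1/d(m-1)}$ Kronecker estimate); you would need to exhibit the concrete balancing in your scheme that produces $\log^4$ before the claimed bound is believable. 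As it stands the proposal identifies a plausible alternative framework but leaves the decisive inductive mechanism, and with it the quantitative bound, unproved.
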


\subsection{An elementary consequence of Theorem \ref{1} and Kronecker's theorem}
Before embarking on the proof of Theorem \ref{1} we observe that by combining Theorem \ref{1} with Kronecker's theorem we can obtain the following simultaneous recurrence result for any collection of quadratic polynomials with real coefficients and no constant term. In addition to being of interest in its own right (and extremely straightforward), this will serve as a model for the inductive step in  the proof of Theorem \ref{3} in Section \ref{Sec3} below.

\begin{cor}\label{cor}
Given any system of \emph{quadratic} polynomials $h_1,\dots,h_d$ with real coefficients and no constant term and an integer $N\geq1$, there exist an integer $1\leq n\leq N$ and a small absolute constant $c_2>0$ such that
\[\|h_j(n)\|\ll \exp\bigl(-c_2(d^{-14}\log N)^{1/4}\bigr) \text{ \ for \ }j=1,\dots,d.\]
\end{cor}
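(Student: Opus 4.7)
The plan is to write $h_j(n) = \A_j n^2 + \B_j n$ and handle the quadratic and linear parts separately: use Theorem \ref{1} to force $\|m^2\A_j\|$ to be very small for some not-too-large $m$, then use Kronecker's theorem to find a multiplier $\ell$ that makes $\|\ell(m\B_j)\|$ small, and finally set $n = m\ell$.

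In more detail, I would fix parameters $M, L$ with $ML \leq N$ and proceed as follows. First, apply Theorem \ref{1} to $\A_1,\dots,\A_d$ with approximation length $M$ to obtain $1 \leq m \leq M$ with
\[
\|m^2 \A_j\| \leq \eta := \exp\bigl(-c_1(d^{-10}\log M)^{1/4}\bigr), \quad j=1,\dots,d.
\]
Next, apply Theorem A (Kronecker) to the $d$ real numbers $m\B_1,\dots,m\B_d$ with parameter $L$ to find $1 \leq \ell \leq L$ such that $\|m\ell\,\B_j\| \ll L^{-1/d}$ for all $j$. Setting $n=m\ell \leq ML \leq N$ and using the trivial bound $\|\ell^2 \A_j m^2\| \leq \ell^2\|m^2\A_j\|$, the triangle inequality yields
\[
\|h_j(n)\| \leq \|\A_j n^2\| + \|\B_j n\| \leq L^2 \eta + O(L^{-1/d}).
\]

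It remains to balance the two error terms. Setting $L^2\eta \asymp L^{-1/d}$ forces $\log L \asymp (d^{-10}\log M)^{1/4}$ (up to constants depending on $d$ through the factor $1/(2+1/d)$, which is bounded below). Choosing $M = N/L$ gives $\log M = \log N - \log L \geq \tfrac{1}{2}\log N$ for $N$ large enough, so we may take $\log L \ll (d^{-10}\log N)^{1/4}$. The resulting error is then
\[
L^{-1/d} = \exp\!\bigl(-\log L\,/\,d\bigr) \ll \exp\!\bigl(-c_2 d^{-1}(d^{-10}\log N)^{1/4}\bigr) = \exp\!\bigl(-c_2(d^{-14}\log N)^{1/4}\bigr),
\]
which is exactly the bound claimed in the corollary. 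For small $N$ (where the exponent is not negative enough to yield anything nontrivial) the statement holds trivially since $\|{\cdot}\| \leq 1/2$.

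There is no serious obstacle here: the argument is essentially bookkeeping. The only thing to watch is the arithmetic in propagating exponents. The loss of $d^{-4}$ between Theorem \ref{1}'s $d^{-10}$ and the corollary's $d^{-14}$ comes entirely from the factor $1/d$ produced by Kronecker's theorem interacting with the fourth-root structure of the quadratic bound; this is the only nontrivial computation, and it matches the statement precisely.
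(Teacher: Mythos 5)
Your proposal is correct and follows the same approach as the paper: write $h_j(x)=\A_j x^2+\B_j x$, apply Theorem~\ref{1} to the $\A_j$ over a shortened range, then apply Kronecker's theorem to the resulting $m\B_j$ and combine via the multiplier trick $n=m\ell$, with the $d^{-14}$ emerging from the $L^{-1/d}$ factor exactly as you describe. The paper simply fixes the split explicitly ($N'=\exp\bigl(c_2(d^{-10}\log N)^{1/4}\bigr)$, then works over $[1,N/N']$ and $[1,N']$) rather than introducing free parameters $M,L$ and balancing afterward, but this is a cosmetic difference.
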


\begin{proof}
Let $N\geq 1$ be an integer and $h_1,\dots,h_d$ be a collection of polynomials of the form
\[h_j(x)=x^2\A_j+x\B_j\]
with $\A_j$ and $\B_j$ real for each $j=1,\dots,d$. We now set
\[N':=\exp\bigl(-c_2(d^{-10}\log N)^{1/4}\bigr)\]
where $c_2=\min\{c_1/8,1/16\}$ and observe that $N'\leq N^{1/16}$.

It follows from Theorem \ref{1} that there exists an integer $1\leq n\leq N/N'$ such that
\[\|n^2\A_j\|\leq \exp\bigl(-c_1(d^{-10}\log N/N')^{1/4}\bigr)\leq\exp\bigl(-\dfrac{c_1}{2}(d^{-10}\log N)^{1/4}\bigr)\]
for $j=1,\dots,d$, where in the second inequality we have simply used the fact that $N'\leq N^{1/16}$. It thus follows that for \underline{all} $1\leq t\leq N'$ we have
\be\label{from1}\|(tn)^2\A_j\|\leq N'^2\exp\bigl(-\dfrac{c_1}{2}(d^{-10}\log N)^{1/4}\bigr)\leq \exp\bigl(-\dfrac{c_1}{4}(d^{-10}\log N)^{1/4}\bigr)\ee
for $j=1,\dots,d$.
We finish by noting that Kronecker's theorem gives the existence of an integer $1\leq t\leq N'$ such that
\be\label{fromK}\|t(n\B_j)\|\ll N'^{-1/d}\leq \exp\bigl(-c_2\,d^{-1}(d^{-10}\log N)^{1/4}\bigr)\ee
for $j=1,\dots,d$. The result follows.
\end{proof}

\subsection{Proof of Theorem \ref{1}}
We begin with a definition. 

For a given real parameter $\VE>0$ and positive integers $d$, $s$, and $M$, we let $P_d(s,M,\VE)$ denote the statement:
\begin{quote}
\emph{Given any real numbers $\A_1,\dots,\A_d$, there exists integers \mbox{$0\leq n_1,\dots,n_s\leq M$} (not all equal to zero) such that \[\|(n_1^2+\cdots+n_s^2)\A_j\|\leq\VE  \text{ \ for \ }j=1,\dots,d.\]}
\end{quote}

The key proposition is the following
\begin{propn}\label{Propn1}
Given any real parameter $\VE>0$ and positive integers $d$, $s$, and $M$, then
\[P_d(2s,M,\VE) \ \Longrightarrow \ P_d(s,N,2\VE)\]
for any integer $N\geq \exp\bigl(C_0d^2\log^2(\VE^{-1}sM^2)\bigr)$, where $C_0\geq1$ is some absolute constant.
\end{propn}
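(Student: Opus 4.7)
Let $B(\delta) := \{T \in \Z : \max_j \|T\A_j\| \le \delta\}$ denote the Bohr neighborhood at scale $\delta$. The hypothesis $P_d(2s,M,\VE)$ asserts precisely that $B(\VE)$ contains some nonzero $T_0 = \sum_{i=1}^{2s} m_i^2$ with $0 \le m_i \le M$, and in particular $T_0 \le 2sM^2$. The conclusion $P_d(s,N,2\VE)$ asks for a nonzero sum $\sum_{i=1}^s n_i^2 \in B(2\VE)$ with $0 \le n_i \le N$. My plan is a pigeonhole argument on the $d$-torus applied to an affine family of $s$-tuples of the shape $n_i = \ell m_i + r_i$, parametrized by $\ell \in [0,L]$ and $(r_1,\ldots,r_s) \in [0,R]^s$, where $m_1,\ldots,m_s$ are inherited from the hypothesis's representation of $T_0$.

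Expanding gives the clean decomposition
\[
\sum_{i=1}^s n_i^2 \;=\; \ell^2 \sum_{i=1}^s m_i^2 \;+\; 2\ell \sum_{i=1}^s m_i r_i \;+\; \sum_{i=1}^s r_i^2,
\]
which splits the target into three pieces of quite different character: a leading quadratic-in-$\ell$ term tied (via $T_0$) to the hypothesis, a linear-in-$r$ middle term amenable to Kronecker's theorem (Theorem A) over $[0,R]^s$, and a trailing sum of $s$ squares of small integers that can be absorbed into the $2\VE$ budget (perhaps by applying the proposition recursively at a smaller scale). I would run pigeonhole first on the $r$-variables to force the middle linear term to land in a small torus-neighborhood of $0$, and then on $\ell$ to make the ``missing half'' $\ell^2 \sum_{i=s+1}^{2s} m_i^2 \cdot \A_j$ close to an integer, so that the leading term's contribution effectively reduces to $\ell^2 T_0 \cdot \A_j$, which the hypothesis controls directly. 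The two nested pigeonhole/Kronecker steps on the $d$-dimensional torus each contribute one factor of $d$ and one logarithm to the required parameter range, which is precisely what matches the $d^2 \log^2(\VE^{-1} s M^2)$ threshold on $N$.

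The central obstacle is that the hypothesis controls only the \emph{full} sum $T_0$ of $2s$ squares, not any proper subsum, so there is no direct way to ensure $\|(\sum_{i=1}^s m_i^2)\A_j\| \le \VE$ on its own. It is precisely this ``half-$T_0$'' issue that forces the auxiliary pigeonhole on $\ell$ in addition to the Kronecker step on $r$, and the delicate part is orchestrating the three error contributions simultaneously under the non-negativity constraint $n_i \ge 0$ (which ties $L$ and $R$ against each other, roughly through $\ell m_i \ge |r_i|$). Balancing these requirements against the accuracy budget $2\VE$ is what ultimately dictates the stated exponential lower bound on $N$, and constitutes the technical heart of the argument.
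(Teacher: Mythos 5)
Your plan has a fundamental scaling obstruction. You apply the hypothesis $P_d(2s,M,\VE)$ to the original numbers $\A_j$ to obtain $T_0$ with $\|T_0\A_j\|\le\VE$, but the quantity that actually appears in your expansion is $\ell^2 T_0\A_j$, whose distance to the nearest integer can be as large as $\ell^2\VE$. The conclusion demands accuracy $2\VE$, so this forces $\ell=O(1)$, at which point the $\ell$-parameter buys you no new range at all. The paper's proof avoids exactly this multiplicative blowup by never invoking the hypothesis against $\A_j$ directly: one first produces, via $m$ iterated applications of Kronecker's theorem followed by a pigeonhole over the $2^m$ subset-sums of the $x_i^2$, two disjoint sums of squares $Y=\sum y_q^2$ and $Z=\sum z_r^2$ with $Y\A_j$ and $Z\A_j$ both lying within $2^{-m/d}$ of a common value $\B_j \pmod 1$, and \emph{then} applies $P_d(2s,M,\VE)$ to the twisted tuple $\B_1,\dots,\B_d$. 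The resulting inequality $\|(\sum_{i=1}^{2s}n_i^2)\B_j\|\le\VE$ translates, after accounting for the errors $\D_{Y,j},\D_{Z,j}$, into $\|(Y\sum_{i\le s}n_i^2 + Z\sum_{i>s}n_i^2)\A_j\|\le\VE + 2sM^2 2^{-m/d}$, and the identity $X_1^2+\cdots+X_n^2=(X_1+\cdots+X_n)^2-\sum_{i\ne i'}X_iX_{i'}$ repackages $Y n_i^2 + Z n_{s+i}^2$ as the single square $(n_i\sum y_q + n_{s+i}\sum z_r)^2$ plus cross terms $x_ix_{i'}\A_j$ that were already made small by the Kronecker step. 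No $\ell$-parameter appears, so nothing gets multiplied by a growing factor.

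There are also two circularities in your sketch. Choosing $\ell$ so that $\ell^2\sum_{i>s}m_i^2\,\A_j$ is near an integer is itself an instance of simultaneous quadratic recurrence --- precisely the statement the proposition is designed to prove --- and it cannot be achieved by bare pigeonhole on $\ell$, since a difference $\ell_1^2-\ell_2^2$ of squares is not a square. Likewise, absorbing the trailing $\sum r_i^2$ ``by applying the proposition recursively at a smaller scale'' is circular: controlling $\sum_{i=1}^s r_i^2\A_j$ over a new range of $r_i$ is exactly the conclusion $P_d(s,\cdot,\cdot)$ under construction. The missing structural idea is the twist-then-complete-the-square device above, which lets the hypothesis be applied losslessly; once that is in place, the Kronecker and pigeonhole ingredients you list do slot into the argument, but in a different configuration than the affine family you propose.
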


\begin{proof}[Proof that Proposition \ref{Propn1} implies Theorem \ref{1}]

It follows immediately from Kronecker's theorem and Lagrange's four squares theorem that the statement $P_d(4, \VE^{-d/2}, \VE)$ holds. By applying Propostion \ref{Propn1} two times we can thus conclude that the statement 
\[P_d(1,\underbrace{\exp\bigl(C_0d^2\log^2\bigl(\VE^{-1}\exp(C_0d^2\log^2(\VE^{-(d+1)}))\bigr)\bigr)}_{(\star)}, 4\VE)\]
also holds. Since $(\star)$ is easily dominated by $\exp(Cd^{10}\log^4\VE^{-1})$ with  $C=16C_0(C_0+1)^2$, Theorem \ref{1} follows as stated by setting $N=\exp(Cd^{10}\log^4\VE^{-1})$ with $C>0$ sufficiently large, and solving for $\VE$.
\end{proof}

\subsection{Proof of Proposition \ref{Propn1}}

Let $\A_1,\dots,\A_d$ be any given real numbers and $(m,X)$ be a pair of positive integer parameters to be determined. 

It follows from Kronecker's theorem that there exist integers $1\leq x_1,\dots,x_m\leq X$ such that
\be\label{est1}\|x_{i}x_{i'}\A_j\|\ll X^{-1/d(m-1)}\text{ \ for \ }1\leq i<i'\leq m \text{ and } j=1,\dots,d.\ee

We now consider all subset-sums of the $x_i^2$. It follows from the pigeonhole principle that there exist two of these sums
\[Y:=y_1^2+\cdots+y_Q^2\quad\text{and}\quad Z:=z_1^2+\cdots+z_R^2\]
with $y_q\neq z_r$ for all $1\leq q\leq Q$ and $1\leq r\leq R$, and real numbers $\B_1,\dots,\B_d$ such that for each $j=1,\dots,d$, the sums of squares $Y$ and $Z$ each satisfy 
\[Y\A_j\equiv\B_j+\D_{Y,j} \imod{1} \quad\text{and}\quad Z\A_j\equiv\B_j+\D_{Z,j} \imod{1}\]
with both $|\D_{Y,j}|\leq 2^{-m/d}$ and $|\D_{Z,j}|\leq 2^{-m/d}$.
Now the assumed validity of the statement $P_d(2s,M,\VE)$ allows us to conclude that  there must exist integers $1\leq n_1,\dots,n_{2s}\leq M$ such that for each $j=1,\dots,d$ 
\[\|(n_1^2+\cdots+n_{2s}^2)\B_j\|\leq\VE\]
and hence
\be\label{est2}\|\{Y(n_1^2+\cdots+n_s^2)+Z(n_{s+1}^2+\cdots+n_{2s}^2)\}\A_j\|\leq\VE+2s M^2 2^{-m/d}.\ee

Since $Y$ and $Z$ are themselves sums of squares (in fact sums of different $x_i^2$) and \[X_1^2+\cdots+X_n^2=(X_1+\cdots+X_n)^2 - \sum_{\substack{i,i'=1\\i\ne i'}}^n X_iX_{i'},\]
the only properties of squares that we are using in this argument, it follows from (\ref{est1}) and (\ref{est2}) that
\[\left\|\left(\sum_{i=1}^s \Bigl\{n_i(y_1+\cdots+y_Q)+n_{s+i}(z_1+\cdots+z_R)\Bigr\}^2\right)\A_j\right\|\leq \VE+2s M^2 2^{-m/d}+O\bigl(sm^2 M^2 X^{-1/d(m-1)}\bigr)\]
for each $j=1,\dots,d$. Note also that 
\[\bigl|n_i(y_1+\cdots+y_Q)+n_{s+i}(z_1+\cdots+z_R)\bigr|\leq mXM\]
for each $i=1,\dots,s$.
Taking $m=(\log X)^{1/2}$ with $X\geq \exp\bigl(Cd^2\log^2(\VE^{-1}sM^2)\bigr)$ for some suitably large absolute constant $C>0$ gives the result (since $N:=mXM$ can certainly then be dominated by $X^2$). \qed

\section{Simultaneous Polynomial Recurrence}\label{Sec3}

In this section we extend the elementary argument presented in Section \ref{Sec2} above to cover simultaneous polynomial recurrence modulo 1 for general systems of polynomials with real coefficients and no constant term. This approach again gives effective quantitative bounds with respect to the number of polynomials in question. Our main result is the following
 
 \begin{thm}\label{3}
Let $k$ be positive integer. Given any system of polynomials $h_1,\dots,h_d$ of degree at most $k$ with real coefficients and no constant term and an integer $N\geq1$, there exists an integer $1\leq n\leq N$ such that 
\[\|h_j(n)\|\ll \exp\bigl(-(4^kk!D_kd^{5k})^{-1}(\log N)^{1/D_k}\bigr) \text{ \ for \ }j=1,\dots,d\]
where $D_k\geq1$ is some absolute constant that depend only on $k$.
\end{thm}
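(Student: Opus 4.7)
The plan is a double induction on the maximum degree $k$. The outer induction proves Theorem \ref{3} at degree $k$ assuming it at degree $k-1$, with base case $k=1$ supplied by Kronecker's theorem. The inductive step at level $k$ splits into a pure-monomial statement, asserting the existence of $n \leq N$ with $\|n^k \A_j\|$ small simultaneously for $j=1,\dots,d$, and a reduction from arbitrary degree-$k$ systems to that monomial statement.

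The reduction step is modeled directly on Corollary \ref{cor}. Writing each $h_j(x) = x^k \A_j + g_j(x)$ with $\deg g_j \leq k-1$ and $g_j(0) = 0$, I would first apply the degree-$k$ monomial case to find $n \leq N/N'$ with $\|n^k \A_j\|$ so small that $\|(tn)^k \A_j\| \leq (N')^k \|n^k \A_j\|$ remains controlled for all $1 \leq t \leq N'$, where $N'$ is a subpolynomial factor of $N$ chosen exactly as in the proof of Corollary \ref{cor}. Then invoke Theorem \ref{3} at degree $k-1$ applied to the system $t \mapsto g_j(tn)$, which has degree $\leq k-1$ in $t$ and vanishes at $t=0$, to find $t \leq N'$ making every $\|g_j(tn)\|$ small. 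The integer $nt \leq N$ then witnesses the conclusion at degree $k$.

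For the monomial case I define $P_d^{(k)}(s, M, \VE)$ analogously to $P_d(s, M, \VE)$ from Section \ref{Sec2}, replacing squares by $k$-th powers. Kronecker's theorem together with Waring's theorem (which writes any positive integer as a sum of $g(k)$ non-negative $k$-th powers) establishes the base case $P_d^{(k)}(g(k), \VE^{-d/k}, \VE)$. The key reduction
\[
P_d^{(k)}(2s, M, \VE) \ \Longrightarrow \ P_d^{(k)}(s, N, 2\VE)
\]
is then iterated down to $s = 1$ to obtain the monomial statement. Its proof parallels that of Proposition \ref{Propn1}: select $x_1, \ldots, x_m \in [1, X]$ such that $\A_j$ times every cross monomial $x_{i_1}^{a_1} \cdots x_{i_r}^{a_r}$ (with $r \geq 2$, $a_l \geq 1$, and $\sum a_l = k$) is small mod $1$; find by pigeonhole two subset-sums $Y, Z$ of the $x_i^k$ whose $\A_j$-residues agree mod $1$ up to $2^{-m/d}$; apply the hypothesis $P_d^{(k)}(2s, M, \VE)$ to the common residues $\B_j$ to obtain $n_1, \ldots, n_{2s}$; and finally expand via the multinomial identity
\[
(X_1 + \cdots + X_n)^k = \sum_{i=1}^n X_i^k + \sum_{\text{cross}} \binom{k}{a_1, \ldots, a_n} X_1^{a_1} \cdots X_n^{a_n}
\]
with $X_i = n_i(y_1 + \cdots + y_Q) + n_{s+i}(z_1 + \cdots + z_R)$ to recover a bound on $\|(\sum_i X_i^k)\A_j\|$.

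The main obstacle is the cross-monomial control step, which is precisely where the outer inductive hypothesis re-enters and why the induction is \emph{double}. Iterated Kronecker alone, which sufficed for $k=2$, breaks down for $k \geq 3$ because the cross terms become nonlinear in each individual variable. Instead, having fixed $x_1, \ldots, x_{i-1}$, each remaining constraint involving $x_i$ is a monomial in $x_i$ of degree at most $k-1$ with no constant term and coefficient built from $\A_j$ and the earlier $x_{i'}$; there are $O_k(i^{k-1})$ such constraints for each $j$, so Theorem \ref{3} at degree $k-1$ applied to a system of $O_k(d\,i^{k-1})$ polynomials in $x_i \leq X$ produces a suitable $x_i$. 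This inflates the effective number of polynomials from $d$ to $d \cdot \mathrm{poly}_k(m)$ at each level, which is what compounds across degrees into the $d^{5k}$ factor in Theorem \ref{3}. The remaining work is quantitative bookkeeping: the cross-monomial losses, the pigeonhole loss $2^{-m/d}$, the multinomial coefficients, and the $k$-th power blow-up $|X_i| \leq mXM$ must all be balanced so that $N$ grows at most tower-type in $(d,k,s,\VE^{-1})$ at each inner iteration, ultimately producing the claimed bound $\exp\bigl(-(4^k k!\, D_k d^{5k})^{-1}(\log N)^{1/D_k}\bigr)$.
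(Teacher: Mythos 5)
Your proposal matches the paper's proof essentially step for step: the outer induction reducing Theorem \ref{3} to the pure monomial case (the paper's Theorem \ref{2}), the inner induction on $s$ via the implication $P_{d,k}(2s,M,\VE)\Rightarrow P_{d,k}(s,N,2\VE)$ with base case from Kronecker plus Waring, and, crucially, the re-entry of the outer inductive hypothesis at degree $k-1$ applied to a system inflated by a factor $\mathrm{poly}_k(m)$ to control the cross monomials, which is exactly where iterated Kronecker ceases to suffice once $k\geq 3$. The only cosmetic difference is that the paper phrases the selection of $x_1,\dots,x_m$ as a single simultaneous application of the inductive hypothesis with $d$ replaced by $d(m-1)^K$, whereas you describe it as a sequential greedy construction; these are the same argument.
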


\subsection{Reduction to the case of $k^{\text{th}}$ powers}\label{sim}
In this subsection we will show that Theorem \ref{3}, in the generality stated above, is in fact a rather straightforward consequence of the following natural intermediate generalization of Theorem \ref{1}.

\begin{thm}\label{2}
Let $k$ be positive integer. Given any real numbers $\A_1,\dots,\A_d$, and an integer $N\geq1$, there exists an integer $1\leq n\leq N$ such that \[\|n^k\A_j\|\ll \exp\bigl(-(C_kd^{5})^{-1}(\log N)^{1/C_k}\bigr) \text{ \ for \ }j=1,\dots,d\]
where $C_k\geq 1$ is some absolute constant depending only on $k$.
\end{thm}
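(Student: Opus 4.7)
The strategy is to induct on $k$, with the base case $k=1$ being Kronecker's theorem, and for the inductive step to mimic the argument of Section \ref{Sec2} nearly verbatim. Define $P_d^{(k)}(s, M, \varepsilon)$ to be the statement that there exist integers $0 \leq n_1, \ldots, n_s \leq M$ (not all zero) with $\|(n_1^k + \cdots + n_s^k)\alpha_j\| \leq \varepsilon$ for all $j = 1, \ldots, d$. The target is the direct analogue of Proposition \ref{Propn1},
\[
P_d^{(k)}(2s, M, \varepsilon) \ \Longrightarrow \ P_d^{(k)}(s, N, 2\varepsilon),
\]
valid whenever $N$ exceeds a suitable quasi-exponential function of $(d, \varepsilon^{-1}, s, M, k)$. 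For the base case, Kronecker's theorem produces an integer $m \leq \varepsilon^{-d}$ with $\|m\alpha_j\|\leq\varepsilon$, and Waring's classical theorem writes $m$ as a sum of $G(k)$ positive $k^{\text{th}}$ powers each of size at most $m^{1/k} \leq \varepsilon^{-d/k}$; this yields $P_d^{(k)}(G(k), \varepsilon^{-d/k}, \varepsilon)$. Iterating the implication $\lceil \log_2 G(k)\rceil = O_k(1)$ times drives $s$ down to $1$, and solving for $\varepsilon$ in terms of $N$ gives Theorem \ref{2}.

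For the implication, I would first select integers $1 \leq x_1, \ldots, x_m \leq X$ \emph{iteratively} so that every mixed degree-$k$ monomial $x_{i_1}^{a_1}\cdots x_{i_t}^{a_t}$ (total degree $k$, not a pure $k^{\text{th}}$ power) satisfies $\|x_{i_1}^{a_1}\cdots x_{i_t}^{a_t}\alpha_j\|\leq \eta$ for all $j$ and some small $\eta$. Having chosen $x_1,\ldots, x_i$, picking $x_{i+1}$ reduces to making $\|x_{i+1}^a\beta_\ell\|$ small, simultaneously for $a \in \{1,\ldots, k-1\}$ and across $O_k(d\,i^{k-1})$ coefficients $\beta_\ell$ built from the previous $x_j$'s and the $\alpha_j$'s. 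For $a=1$ this is Kronecker, and for $2\leq a\leq k-1$ it is precisely the inductive hypothesis (Theorem \ref{2} for $a^{\text{th}}$ powers applied to the $\beta_\ell$'s). This is the only point at which the induction on $k$ is invoked.

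Once the $x_i$'s are in hand, the rest transcribes the proof of Proposition \ref{Propn1}. Pigeonholing on the $2^m$ subset sums of $\{x_1^k,\ldots, x_m^k\}$ produces disjoint $I,J\subseteq\{1,\ldots,m\}$ with $Y:=\sum_{i\in I}x_i^k$, $Z:=\sum_{i\in J}x_i^k$ satisfying $Y\alpha_j \equiv \beta_j \imod{1}$ and $Z\alpha_j \equiv \beta_j \imod{1}$ up to errors $\ll 2^{-m/d}$. Applying the hypothesis $P_d^{(k)}(2s, M, \varepsilon)$ to $(\beta_j)$ yields $n_1,\ldots, n_{2s}\in[0,M]$ with $\|(\sum_i n_i^k)\beta_j\|\leq\varepsilon$, and the binomial identity
\[
(n_i A + n_{s+i}B)^k = n_i^k A^k + n_{s+i}^k B^k + \sum_{l=1}^{k-1}\binom{k}{l} n_i^l n_{s+i}^{k-l}A^l B^{k-l},
\]
with $A:=\sum_{i\in I}x_i$ and $B:=\sum_{i\in J}x_i$, compresses the $2s$ contributions into $s$ pure $k^{\text{th}}$ powers. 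Indeed, $A^k$ equals $Y$ plus mixed degree-$k$ monomials in indices from $I$, and likewise $B^k$ equals $Z$ plus mixed monomials in indices from $J$; because $I\cap J =\emptyset$, every monomial arising from $A^l B^{k-l}$ with $0<l<k$ is automatically mixed (it involves at least one index from each of $I$ and $J$). All stray monomials are therefore controlled by $\eta$, giving $\|\sum_{i=1}^s (n_i A + n_{s+i}B)^k\alpha_j\|\leq 2\varepsilon$ and hence $P_d^{(k)}(s, 2mXM, 2\varepsilon)$ for appropriate choices of $m, X$ (roughly $m\sim d\log(\varepsilon^{-1}sM^k)$ and $\log X$ polynomially large in $d$ and $m^k$).

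The main obstacle, and presumably the source of the paper's warning about intricate bookkeeping, is propagating the quantitative loss through the iterative selection of the $x_i$'s---where Theorem \ref{2} must be invoked for degrees $2,\ldots, k-1$ across $m$ stages---and then through the $O_k(1)$ halvings of $s$. A naive accounting of the nested recurrences inflates the $d$-exponent badly, so the delicate point is to arrange parameters $\eta, m, X$ at each stage (and calibrate the induction on $k$) to yield the final relation $N\leq \exp\!\bigl(C_k d^5 (\log\varepsilon^{-1})^{C_k}\bigr)$, which inverts to the stated $d^5$-dependence independent of $k$. This amounts to the same tower-of-logarithms computation as the $(\star)$ calculation at the end of Section \ref{Sec2}, now repeated $O_k(1)$ times with all $k$-dependent inflations absorbed into a single constant $C_k$.
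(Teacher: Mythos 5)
Your proposal tracks the paper's strategy closely: same base case via Kronecker plus Waring, same ``halving of $s$'' proposition, same iterative selection of $x_1,\dots,x_m$, same pigeonholing on subset sums, same expansion of $(n_iA+n_{s+i}B)^k$ with the correct observation that all cross terms from $A^lB^{k-l}$ ($0<l<k$) are automatically mixed because $I\cap J=\emptyset$, and the same flavour of parameter calibration at the end. However, there is a genuine gap at the iterative selection step. You claim that, when picking $x_{i+1}$, the constraints $\|x_{i+1}^a\beta_\ell\|\leq\eta$ for $a=1,\dots,k-1$ are handled by Kronecker (for $a=1$) and ``Theorem \ref{2} for $a^{\text{th}}$ powers'' (for $2\leq a\leq k-1$). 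But these are separate existence statements: each produces \emph{some} integer satisfying the constraints for that one exponent, not a common integer that works for all exponents at once. Since the candidate sets produced by these separate applications need not intersect, this step does not yield a single $x_{i+1}$ controlling every mixed monomial in which it appears, and the argument does not close.

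The paper avoids this by folding all the pure-power constraints at a given stage into one simultaneous-polynomial constraint: each $\|t^a\beta_\ell\|\leq\eta$ is the smallness of the monomial $h_{a,\ell}(t)=\beta_\ell t^a$, a polynomial of degree $\leq K$ with no constant term, and then Theorem \ref{3} for degree $K$ delivers a single $t$ handling all of them. This is precisely why the induction is double-layered: assume Theorem \ref{2} for all degrees $1,\dots,K$, upgrade it (via the Kronecker-based reduction of Section \ref{sim}, the higher-degree analogue of Corollary \ref{cor}) to Theorem \ref{3} for degree $K$, and only then run the selection step in the proof of Proposition \ref{Propn2} to obtain Theorem \ref{2} for $K+1$. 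Your outline omits the upgrade to Theorem \ref{3} and asserts that ``Theorem \ref{2} for $a^{\text{th}}$ powers'' is the only place the induction is used, which collapses the double induction to a single one and leaves the simultaneity unaddressed. Aside from this, the remainder of your transcription is sound and matches the paper.
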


\begin{proof}[Proof that Theorem \ref{2} implies Theorem \ref{3}]  This argument will follow along the same lines as the deduction of Corollary \ref{cor} from Theorem \ref{1} and Kronecker's theorem.
We will proceed by induction on $k$ and show that if, for arbitrary integers $k\geq2$ and $N\geq1$, the conclusion of Theorem \ref{2} holds for any given any real numbers $\A_1,\dots,\A_d$ (with this value of $k$) and that the conclusion of Theorem \ref{3} holds for any collection of $d$ polynomials of degree at most $k-1$ with real coefficients and no constant term, then the conclusion of Theorem \ref{3} in fact holds for any collection of $d$ polynomials of degree at most $k$ with real coefficients and no constant term.

Let $N\geq1$ be an integer and $h_1,\dots,h_d$ be any collection of $d$ polynomials of degree at most $k$ with real coefficients and no constant term. For each $1\leq j\leq d$ we shall express these as
\[h_j(x)=\A_jx^k + h'_j(x)\]
where $h'_j$ is a polynomial of degree at most $k-1$ with real coefficients and no constant term. 
We now set
\[N':=\exp\bigl((4kC_kd^{5})^{-D_{k-1}}(\log N)^{1/C_k}\bigr),\] 
noting that although slightly complicated this quantity is definitely smaller than $N^{1/4}$. 
The assumed validity of Theorem \ref{2} for $k$ allows us to conclude the existence of an integer $1\leq n\leq N/N'$ such that
\be\label{1/2}\|n^k\A_j\|\ll \exp\bigl(-(C_kd^{5})^{-1}(\log N/N')^{1/C_k}\bigr)\leq \exp\bigl(-(2C_kd^{5})^{-1}(\log N)^{1/C_k}\bigr)\ee
for $j=1,\dots,d$, and hence for \underline{all} $1\leq t\leq N'$ we have
\be\label{noD}\|(tn)^k\A_j\|\ll N'^k\exp\bigl(-(2C_kd^{5})^{-1}(\log N)^{1/C_k}\bigr)\leq \exp\bigl(-(4C_kd^{5})^{-1}(\log N)^{1/C_k}\bigr)\ee
for $j=1,\dots,d$. Note that for the last inequality in (\ref{1/2}) we simply used the fact that $N'\leq N^{1/2}$, while for the last inequality in (\ref{noD}) we used the fact that $N'\leq\exp\bigl((4kC_kd^{5})^{-1}(\log N)^{1/C_k}\bigr)$.

Finally we note that the inductive hypothesis, applied to the polynomials $H_1,\dots,H_d$ given by \[H_j(t)=h'_j(tn)\] for $j=1,\dots,d$, 
gives the existence of an integer $1\leq t\leq N'$ such that 
\begin{align*}\|h'_j(tn)\|&\ll \exp\bigl(-(4^{k-1}(k-1)!D_{k-1}d^{5(k-1)})^{-1}(\log N')^{1/D_{k-1}}\bigr)\\
&\leq \exp\bigl(-(4^{k}k!D_kd^{5k})^{-1}(\log N)^{1/D_k}\bigr) 
\end{align*}
for $j=1,\dots,d$, where $D_k=C_kD_{k-1}$ since by the definition of $N'$ we have
\[(\log N')^{1/D_{k-1}}=(4kC_kd^5)^{-1}(\log N)^{1/C_kD_{k-1}}.\qedhere\]
\end{proof}

\subsection{Proof of Theorem \ref{2}}
We again proceed by induction on $k$. We suppose that Theorem \ref{2} holds for all positive integers $1\leq k\leq K$, and hence also, in light of the arguments in Section \ref{sim} above, that Theorem \ref{3} holds for $K$.  We seek to verify Theorem \ref{2} for $k=K+1$.

For this fixed integer $K+1$, any given real parameter $\VE>0$ and positive integers $d$, $s$, and $M$, we let $P_{d,K+1}(s,M,\VE)$ denote the statement:
\begin{quote}
\emph{Given any real numbers $\A_1,\dots,\A_d$, there exists integers \mbox{$0\leq n_1,\dots,n_s\leq M$} (not all equal to zero) such that \[\|(n_1^{K+1}+\cdots+n_s^{K+1})\A_j\|\leq\VE  \text{ \ for \ }j=1,\dots,d.\]}
\end{quote}

As in the proof of Theorem \ref{1}, the verification of Theorem \ref{2} for $k=K+1$ relies on the the following key proposition:

\begin{propn}\label{Propn2}
Given any real parameter $\VE>0$ and positive integers $d$, $s$, and $M$, then
\[P_{d,K+1}(2s,M,\VE) \ \Longrightarrow \ P_{d,K+1}(s,N,2\VE)\]
for any integer 
\[N\geq \exp\bigl(C_1\bigl(d^2\log(\VE^{-1}sM^{K+1})\bigr)^{C_2}\bigr),\] 
where $C_1$ and $C_2$ are absolute constants greater than or equal to 1 that depend only on $K+1$.
\end{propn}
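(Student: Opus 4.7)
The plan is to adapt the proof of Proposition \ref{Propn1} to the $(K+1)$-th power setting by replacing Kronecker's theorem with the inductive hypothesis Theorem \ref{3} (for polynomials of degree at most $K$), and by replacing the identity $X_1^2+\cdots+X_n^2=(X_1+\cdots+X_n)^2-\sum_{i\neq i'}X_iX_{i'}$ with the full multinomial expansion of $(X_1+\cdots+X_n)^{K+1}$.

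First I would iteratively choose integers $x_1,\dots,x_m\in[1,X]$ so that every \emph{mixed} monomial $x_1^{a_1}\cdots x_m^{a_m}$ of total degree $K+1$ (meaning $\sum a_i=K+1$ with at least two $a_i>0$) satisfies $\|x_1^{a_1}\cdots x_m^{a_m}\A_j\|\le\eta$ for all $j$. At step $i$, with $x_1,\dots,x_{i-1}$ already fixed, every such monomial whose largest-index variable is $x_i$ takes the form $x_i^a\cdot N$ with $a\in\{1,\dots,K\}$ and $N$ a monomial in $x_1,\dots,x_{i-1}$ of degree $K+1-a\ge1$. The constraints $\|x_i^a(N\A_j)\|\le\eta$ then amount to applying Theorem \ref{3} in degree $K$ (available by inductive hypothesis) to at most $c_K d\, m^K$ polynomials of the form $t\mapsto t^a\gamma$, each of degree $\le K$ with no constant term.

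Next, by pigeonholing the $2^m$ subset sums $\sum_{i\in S}x_i^{K+1}$ against the $d$ coordinates $\A_j$, I obtain two disjoint subsets $S_Y,S_Z\subseteq\{1,\dots,m\}$ (after passing to the symmetric difference) yielding $Y=y_1^{K+1}+\cdots+y_Q^{K+1}$ and $Z=z_1^{K+1}+\cdots+z_R^{K+1}$ with $(Y-Z)\A_j\equiv\delta_j\imod{1}$, $|\delta_j|\le 2^{-m/d}$. Setting $\B_j\equiv Y\A_j\imod{1}$ and invoking $P_{d,K+1}(2s,M,\VE)$ produces $0\le n_1,\dots,n_{2s}\le M$, not all zero, with $\|(n_1^{K+1}+\cdots+n_{2s}^{K+1})\B_j\|\le\VE$. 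The key combinatorial move is to form the $s$ integers $N_i:=n_i(y_1+\cdots+y_Q)+n_{s+i}(z_1+\cdots+z_R)$ and expand $N_i^{K+1}$ multinomially: the \emph{pure-index} terms (a single $y_q$ or $z_r$ raised to the $(K+1)$-th) sum to $n_i^{K+1}Y+n_{s+i}^{K+1}Z$, which modulo $\A_j$ reduces to $(n_i^{K+1}+n_{s+i}^{K+1})\B_j$ up to a pigeonhole error of size $M^{K+1}2^{-m/d}$; every other term is a multinomial coefficient times $n_i^a n_{s+i}^b$ times a mixed monomial in the disjoint $y$'s and $z$'s, hence a mixed monomial in the $x$'s whose $\|\cdot\A_j\|$ is at most $\eta$. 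Summing over $i=1,\dots,s$ yields $\bigl\|\bigl(\sum_{i=1}^s N_i^{K+1}\bigr)\A_j\bigr\|\le\VE+sM^{K+1}2^{-m/d}+c'_K\, s(2m)^{K+1}M^{K+1}\eta$.

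I would then choose $m\sim d\log(\VE^{-1}sM^{K+1})$ to absorb the pigeonhole term, and take $X$ (hence $N:=2mXM\le X^2$) large enough so that the inductive bound on $\eta$ dominates the remaining mixed-monomial error. Since the Theorem \ref{3} bound has $(\log X)^{1/D_K}$ in its exponent against a constant of order $(d\, m^K)^{5K}$, this produces the stated condition $\log N\ge C_1(d^2\log(\VE^{-1}sM^{K+1}))^{C_2}$ for suitable $C_1,C_2$ depending only on $K+1$. The main obstacle is the bookkeeping in the iterative step: one must verify that restricting any mixed monomial of degree $K+1$ to a single unknown variable always leaves a polynomial condition of degree at most $K$ (never $K+1$), so that the inductive hypothesis genuinely applies, and one must track the polynomial blow-up $c_K m^K d$ in the number of simultaneous constraints through Theorem \ref{3}'s polynomial dependence on $d$ without losing the required form of the final bound.
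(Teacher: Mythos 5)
Your proposal is correct and follows essentially the same route as the paper: iteratively choose $x_1,\dots,x_m\in[1,X]$ via the degree-$K$ inductive hypothesis (Theorem \ref{3}) so that all mixed degree-$(K+1)$ monomials have small $\|\cdot\A_j\|$, pigeonhole the $2^m$ subset sums of $x_i^{K+1}$ to produce disjoint $Y,Z$ with nearly matching residues $\B_j$, invoke $P_{d,K+1}(2s,M,\VE)$ on the $\B_j$, form $N_i=n_i\sum y_q+n_{s+i}\sum z_r$, and expand $N_i^{K+1}$ multinomially so that the pure terms recover $n_i^{K+1}Y+n_{s+i}^{K+1}Z$ and the cross terms are controlled by the mixed-monomial estimate. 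The paper compresses the first step into a single terse appeal to the inductive hypothesis ``with $d=d(m-1)^K$'' and tracks slightly different explicit constants, but your spelled-out iterative construction and error bookkeeping are exactly what that appeal amounts to, and your final parameter choices $m\sim d\log(\VE^{-1}sM^{K+1})$ and $N\sim X^2$ reproduce the stated admissible range for $N$.
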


\begin{proof}[Proof that Proposition \ref{Propn2} implies Theorem \ref{2}]

In this case it follows immediately from Kronecker's theorem and any solution to \emph{Waring's Problem}\footnote{\,see \cite{P2} and Chapter 5 in \cite{P} 
for a discussion of elementary proofs of the Hilbert-Waring theorem.} that the statement $P_{d,K+1}(s, \VE^{-d/K+1}, \VE)$ holds provided $s=s(K+1)$ is chosen sufficiently large with respect to $K+1$. We will assume that $s$ is a power of 2. By applying Propostion \ref{Propn2} $\log_2s$ times we can thus conclude that the statement 
\[P_{d,K+1}(1,\underbrace{\exp\bigl(((K+1)C_1)^{\log_2s}\bigl(d^{4}\log(\VE^{-(d+1)}s)\bigr)^{C_2^{\log_2s}}\bigr)}_{(\star)}, s\VE)\]
also holds. Theorem \ref{2} follows as stated by setting $N=\text{($\star$)}$ and solving for $\VE$ as this gives
\[\VE=c\exp\bigl(-d^{-5}\bigl(((K+1)C_1)^{-\log_2s}\log N\bigr)^{1/C_2^{\log_2s}}\bigr).\qedhere\] \end{proof}

Matters thus reduce to verifying Proposition \ref{Propn2}.

\subsection{Proof of Proposition \ref{Propn2}}

Let $\A_1,\dots,\A_d$ be any given real numbers and $(m,X)$ be a pair of positive integer parameters to be determined. 

It follows from Theorem \ref{2}, via our inductive hypothesis with ``$d=d(m-1)^{K}$", that there exist integers $1\leq x_1,\dots,x_m\leq X$ such that we simultaneously have
\begin{align}\label{est1'}
\|x^k_{i_1}(x_{i_2}\dots x_{i_{K+2-k}}\A_j)\|
\ll \exp\bigl(-(4^KK!D_Kd^{3K}(m-1)^{3K^2})^{-1}(\log X)^{1/D_K}\bigr)
\end{align}
for all $1\leq i_1<i_2,\dots,i_{K+1}\leq m$, $j=1,\dots,d$ and $1\leq k\leq K$.

The pigeonhole principle implies that there exist two disjoint subsets $\{y_1,\dots,y_Q\}$ and $\{z_1,\dots,z_R\}$ of $\{x_1,\dots,x_m\}$ and real numbers $\B_1,\dots,\B_d$ such that for each $j=1,\dots,d$, the sums of the respective $(K+1)^{\text{th}}$ powers, namely \[Y=y_1^{K+1}+\cdots+y_Q^{K+1}\quad \text{and}\quad Z=z_1^{K+1}+\cdots+z_R^{K+1},\] each satisfy 
\[Y\A_j\equiv\B_j+\D_{Y,j} \imod{1} \quad\text{and}\quad Z\A_j\equiv\B_j+\D_{Z,j} \imod{1}\]
with both $|\D_{Y,j}|\leq 2^{-m/d}$ and $|\D_{Z,j}|\leq 2^{-m/d}$.

Now the assumed validity of the statement $P_{d,K+1}(2s,M,\VE)$ allows us to conclude that  there must exist integers $1\leq n_1,\dots,n_{2s}\leq M$ such that for each $j=1,\dots,d$ 
\[\|(n_1^{K+1}+\cdots+n_{2s}^{K+1})\B_j\|\leq\VE\]
and hence
\be\label{est2'}\|\{Y(n_1^{K+1}+\cdots+n_s^{K+1})+Z(n_{s+1}^{K+1}+\cdots+n_{2s}^{K+1})\}\A_j\|\leq\VE+2s M^{K+1} 2^{-m/d}.\ee

Since each $Y$ and $Z$ are themselves sums of $(K+1)^{\text{th}}$ powers (in fact sums of different $x_i^{K+1}$) and \[X_1^{K+1}+\cdots+X_n^{K+1}=(X_1+\cdots+X_n)^{K+1} - \!\!\!\!\!\!\!\!\!\!\!\!\sum_{\substack{i_1,\dots,i_{K+1}=1\\ \text{diagonal excluded}}}^n \!\!\!\!\!\!\!\!\!\! X_{i_1}\cdots X_{i_{K+1}},\]
it follows from (\ref{est1'}) and (\ref{est2'}) that
\begin{align*}
&\left\|\left(\sum_{i=1}^s \Bigl\{n_i(y_1+\cdots+y_Q)+n_{s+i}(z_1+\cdots+z_R)\Bigr\}^{K+1}\right)\A_j\right\|  \\
&\quad\quad
\leq \VE+2s M^{K+1} 2^{-m/d}+O\bigl(sm^{K+1} M^{K+1}  \exp(-(4^KK!D_Kd^{3K}(m-1)^{3K^2})^{-1}(\log X)^{1/D_K})\bigr)
\end{align*}
for each $j=1,\dots,d$. Note also that 
\[\bigl|n_i(y_1+\cdots+y_Q)+n_{s+i}(z_1+\cdots+z_R)\bigr|\leq mXM\]
for each $i=1,\dots,s$.
Taking 
\[m=\bigl((4^{K}K!D_Kd^{3K-1})^{-1}(\log N)^{1/D_{K}}\bigr)^{1/3K^2+1}\] 
with
\[X\geq \exp\bigl(C\bigl((4^{K}K!D_k)\bigl(d^{2}\log (\VE^{-1}sM^{K+1})\bigr)^{3K^2+1}\bigr)^{D_{K}}\bigr)\]
 for some suitably large absolute constant $C>0$ gives the result, since in this case the quantity $N:=mXM$ can certainly then be dominated by $X^2$.
\qed

\end{document}